\begin{document}

\title{Slant submanifolds of Golden Riemannian manifolds
}
%



\author{Oguzhan Bahad{\i}r  \and  Siraj Uddin    
}


\institute{O.  Bahad{\i}r  \at
               Department of Mathematics,\\
 Faculty of Arts and Sciences,\\
  K.S.U. Kahramanmaras, Turkey\\
 \email{oguzbaha@gmail.com} 
          \and          
S. Uddin \at
              Department of Mathematics,\\
               Faculty of Science,\\
                King Abdulaziz University, 21589 Jeddah, Saudi Arabia\\
              \email{siraj.ch@gmail.com}      
}

\date{Received: date / Accepted: date}
\sloppy
\maketitle

\begin{abstract}
In this paper, we study slant submanifolds of Riemannian manifolds with Golden structure. A Riemannian manifold $(\tilde{M},\tilde{g},{\varphi})$ is called a Golden Riemannian manifold if the $(1,1)$ tensor field ${\varphi}$ on $\tilde{M}$ is a golden structure, that is ${\varphi}^{2}={\varphi}+I$ and the metric $\tilde{g}$ is ${\varphi}-$ compatible. First, we get some new results for submanifolds of a Riemannian manifold with Golden structure. Later we characterize slant submanifolds of a Riemannian manifold with Golden structure and provide some non-trivial examples of slant submanifolds of Golden Riemannian manifolds.
\keywords{Invariant submanifolds\and anti-invariant \and slant submanifolds \and  Golden structure \and Riemannian  manifolds}
\subclass{53C15 \and 53C25 \and 53C40 \and 53B25 }
\end{abstract}

\section{Introduction}
\label{intro}

The golden ratio has fascinated Western intellectuals of diverse interests for at least 2,400 years.
Some of the greatest mathematical minds of all ages, from Pythagoras and Euclid in ancient Greece, through the medieval Italian mathematician Leonardo of Pisa and the Renaissance astronomer Johannes Kepler, to present-day scientific figures such as Oxford physicist Roger Penrose, have spent endless hours over this simple ratio and its properties. On the other hand, the fascination with the Golden Ratio is not confined just to mathematicians only but also biologists, artists, musicians, historians, architects, psychologists, and even mystics have pondered and debated the basis of its ubiquity and appeal. In fact, it is probably fair to say that the Golden Ratio has inspired thinkers of all disciplines like no other number in the history of mathematics (see \cite{livio},\cite{viki}).

In \cite{inv} C. Hretcanu and M. Crasmareanu studied the some properties of the induced structure on an invarian submanifold in a golden Riemannian manifold. \cite{golden}, M. Crasmareanu and C. Hretcanu investigated geometry of the golden structure on a manifold by using a corresponding almost product structure. In \cite{App}, C. Hretcanu and M. Crasmareanu show that a Golden Structure induces on every invariant submanifold a Golden Structure, too. In \cite{aydin}, A. Gezer, N. Cengiz, A. Salimov diccussed the problem of the integrability for Golden Riemannian structures. In \cite{m�zkan}, M. Ozkan investigated golden semi-Riemannian manifold and defines the horizontal lift of golden structure in tangent bundle.

In the end of twentieth century, B.-Y. Chen introduced the notion of slant submanifolds of almost Hermitian manifolds \cite{C1,C2}. Later, A. Lotta has extended his idea for contact metric manifolds \cite{L1} and the similar extension of slant submanifolds of $K$-contact and Sasakian manifolds has been given by Cabrerizo et al. \cite{Ca2}.

In this paper, we study  slant submanifolds of Golden Riemannian manifolds. In Section 2, we give some basic concepts. In Section 3, we get some results for submanifolds of a Riemannian manifold with Golden structure. In Section 4, we characterize slant submanifolds of a Riemannian manifold with Golden structure.  At the end of the this paper, we provide some non-trivial examples of slant Submanifolds of Golden Riemannian manifolds.

\section {Golden Riemannian manifolds}
In this section we give the some definitions and notations for Golden Riemannian manifolds.
\begin{definition}\rm{(\cite{poly},\cite{golden})
Let $(\tilde{M},\tilde{g})$ be a $(m+n)-$ dimensional
Riemannian manifold and let $F$ be a $(1,1)-$ tensor field on $\tilde{M}$. If $F$ satisfies the following equation
\begin{align}Q(X)=X^{n}+a_{n}X^{n-1}+...+a_{2}X+a_{1}I=0,\notag
\end{align}
where $I$ is the identity transformation and (for $X = F$) $F^{n-1}(p),F^{n-2}(p),...,F(p)$, $I$ are linearly independent at every
point $p\in\tilde{M}$. Then the polynomial $Q(X)$ is called the structure polynomial.}
\end{definition}
If we select the structure polynomial $Q(X) = X^{2}+I$ (or $Q(X)=X^{2}-I$ ) we get an almost complex structure (respectively, an almost
product structure).
\begin{definition}\rm{(\cite{poly},\cite{subgolden})
Let $(\tilde{M},\tilde{g})$ be a $(m+n)-$ dimensional
Riemannian manifold and let ${\varphi}$ be a $(1,1)-$ tensor field on $\tilde{M}$. If ${\varphi}$ satisfies the following equation
\begin{align}
{\varphi}^{2}-{\varphi}-I=0,\label{alt�n}
\end{align}
where $I$ is the identity transformation. Then the tensor field ${\varphi}$ is called a golden structure on $\tilde{M}$. If the Riemannian metric $\tilde g$ is ${\varphi}$ compatible, then $(\tilde{M},\tilde{g},{\varphi})$ is called a Golden Riemannian manifold \cite{golden}.}
\end{definition}
For ${\varphi}-$ compatible metric, we have
\begin{align}
\tilde{g}({\varphi}X,Y)=\tilde{g}(X,{\varphi}Y)\label{compe}
\end{align}
for any $X,Y\in\Gamma(T\tilde{M})$, where $\Gamma(T\tilde{M})$ is the set of all vector fields on $\tilde M$. If we interchange $X$ by $\varphi X$ in (\ref{compe}), then (\ref{compe}) may also be written as
\begin{align}
\tilde{g}({\varphi}X,{\varphi}Y)=\tilde{g}({\varphi}^{2}X,Y)=\tilde{g}({\varphi}X,Y)+\tilde{g}(X,Y) \label{golmet}
\end{align}
Let $\tilde M$ be an $n$-dimensional differentiable manifold with a
tensor field $F$ of type $(1,1)$ on $\tilde M$ such that $F^{2}=I$.
Then $F$ is called an almost product structure. If an almost product structure $F$
admits a Riemannian metric $\tilde g$ such that
\begin{align}
\tilde g(FX,Y)=\tilde g(X,FY),\,\forall\, X,Y\in \Gamma (T\tilde M),\notag
\end{align}
then $(\tilde M,\tilde g)$ is called almost
product Riemannian manifold.

An almost product structure $F$ induces a Golden structure as follows
\begin{align}
{\varphi}=\frac{1}{2}(I+\sqrt{5}F)\label{co}
\end{align}
Conversely, if ${\varphi}$ is a golden structure then
\begin{align}
F=\frac{1}{\sqrt{5}}(2{\varphi}-I)\label{com}
\end{align}
is an almost product structure (\cite{golden}).
\begin{example} \cite{App} \label{ex}
Consider the Euclidean $4-$space $\mathbb{R}^{4}$ with standart coordinates $(x_{1},x_{2},x_{3},x_{4})$. Let ${\varphi}$ be an $(1,1)$ tensor field on $\mathbb{R}^4$ defined by
\begin{align}{\varphi}(x_{1},x_{2},x_{3},x_{4})=(\psi x_{1},\psi x_{2},(1-\psi)x_{3},(1-\psi)x_{4})\notag
\end{align}
for any vector field $(x_{1},x_{2},x_{3},x_{4})\in \mathbb{R}^4$, where $\psi=\frac{1+\sqrt{5}}{2}$ and $1-\psi=\frac{1-\sqrt{5}}{2}$ are the roots of the equation $x^{2}=x+1$.

Then we obtain
\begin{align*}
{\varphi}^{2}(x_{1},x_{2},x_{3},x_{4})&=(\psi^{2} x_{1},\psi^{2} x_{2},(1-\psi)^{2}x_{3},(1-\psi)^{2}x_{4})\\
&=(\psi x_{1},\psi x_{2},(1-\psi)x_{3},(1-\psi)x_{4})+(x_{1},x_{2},x_{3},x_{4}).
\end{align*}
Thus, we have ${\varphi}^{2}-{\varphi}-I=0$. Moreover, we get
\begin{align*}\langle{\varphi}(x_{1},x_{2},x_{3},x_{4}),(y_{1},y_{2},y_{3},y_{4})\rangle=\langle(x_{1},x_{2},x_{3},x_{4}),{\varphi}(y_{1},y_{2},y_{3},y_{4})\rangle
\end{align*}
for each vector fields $(x_{1},x_{2},x_{3},x_{4})$, $(y_{1},y_{2},y_{3},y_{4})$$\in \mathbb{R}^4$, where$\langle\,, \rangle$ is the standard metric on $\mathbb{R}^4$. Hence, ($\mathbb{R}^4,\langle\,,\rangle,{\varphi})$ is a Golden Riemannian manifold .
\end{example}
\begin{theorem}\cite{aydin}
Let $(\tilde M,\tilde g,{\varphi})$ be a Golden Riemannian manifold. Then Golden structure ${\varphi}$ is integrable if and only if $\tilde{\nabla}\varphi=0$, where $\tilde{\nabla}$ is the Levi-Civita
connection of $\tilde g$ on $\tilde M$.
\end{theorem}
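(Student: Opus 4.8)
The plan is to argue directly with $\varphi$ and $\tilde\nabla$, isolating $\tilde\nabla\varphi$ inside the Nijenhuis torsion. Starting from
\[
N_\varphi(X,Y)=[\varphi X,\varphi Y]-\varphi[\varphi X,Y]-\varphi[X,\varphi Y]+\varphi^{2}[X,Y],
\]
I would replace every bracket by the torsion-free expression $\tilde\nabla_{(\cdot)}(\cdot)-\tilde\nabla_{(\cdot)}(\cdot)$, expand $\tilde\nabla_{U}(\varphi V)=(\tilde\nabla_{U}\varphi)V+\varphi\,\tilde\nabla_{U}V$, and use $\varphi^{2}=\varphi+I$ to kill all the terms that do not involve $\tilde\nabla\varphi$. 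This should leave the identity
\begin{align}
N_\varphi(X,Y)=(\tilde\nabla_{\varphi X}\varphi)Y-(\tilde\nabla_{\varphi Y}\varphi)X+\varphi\big((\tilde\nabla_{Y}\varphi)X\big)-\varphi\big((\tilde\nabla_{X}\varphi)Y\big).\notag
\end{align}
From this, $\tilde\nabla\varphi=0\Rightarrow N_\varphi=0$ is immediate. (One could equally pass through the associated almost product structure $F$ of \eqref{com}: $N_{F}$ and $\tilde\nabla F$ are nonzero scalar multiples of $N_\varphi$ and $\tilde\nabla\varphi$, so parallelism and integrability transfer verbatim.)

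For the converse I would bring in $\varphi$-compatibility. By \eqref{compe}, $\varphi$ is $\tilde g$-self-adjoint, hence so is $\tilde\nabla_{X}\varphi$ for every $X$ (because $\tilde\nabla\tilde g=0$); and covariantly differentiating $\varphi^{2}=\varphi+I$ gives $(\tilde\nabla_{X}\varphi)\varphi+\varphi(\tilde\nabla_{X}\varphi)=\tilde\nabla_{X}\varphi$. Now split $T\tilde M=D_{\psi}\oplus D_{1-\psi}$ into the eigenbundles of $\varphi$ (with $\psi=\tfrac{1+\sqrt5}{2}$, $1-\psi=\tfrac{1-\sqrt5}{2}$, the roots of $x^{2}-x-1$). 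Self-adjointness makes the splitting $\tilde g$-orthogonal; and, applied to an eigenvector $V$ with $\varphi V=\lambda V$, the displayed relation gives $\varphi\big((\tilde\nabla_{X}\varphi)V\big)=(1-\lambda)(\tilde\nabla_{X}\varphi)V$, so $(\tilde\nabla_{X}\varphi)V$ lies in the other eigenbundle. Thus $\tilde\nabla_{X}\varphi$ interchanges $D_{\psi}$ and $D_{1-\psi}$, and in particular $\tilde g\big((\tilde\nabla_{X}\varphi)V,W\big)=0$ whenever $V,W$ sit in the same eigenbundle. So $\tilde\nabla\varphi$ is carried by a single bundle map $\Gamma\colon D_{\psi}\to D_{1-\psi}$ depending tensorially on the differentiation direction.

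Substituting this into the Nijenhuis identity with $Y$ in a fixed eigenbundle and projecting $N_\varphi(X,Y)=0$ onto $D_{\psi}$ and $D_{1-\psi}$, the coefficient $2\psi-1=\sqrt5\neq0$ does not vanish, and $N_\varphi=0$ should reduce to a partial symmetry of $\Gamma$ — roughly, $\Gamma_{U}W=\Gamma_{W}U$ for $U,W\in D_{\psi}$, together with the mirror statement in $D_{1-\psi}$. \textbf{Promoting this partial symmetry to $\Gamma\equiv0$, equivalently $\tilde\nabla\varphi=0$, is the step I expect to be the real obstacle}, since the pieces of $\tilde\nabla\varphi$ whose differentiation direction is transverse to the eigenbundle of the slot being moved are not obviously controlled by $N_\varphi=0$ alone. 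I would try to close this either by extracting an extra Codazzi-type identity (using the first Bianchi identity for the curvature of $\tilde\nabla$ together with the self-adjointness of $\tilde\nabla\varphi$), or by showing that integrability also forces the eigendistributions to be totally geodesic — which, with their Frobenius integrability, is precisely $\tilde\nabla\varphi=0$ via a de Rham-type splitting. Everything else (the Nijenhuis expansion, the eigenbundle decomposition, the projections) is routine tensor computation.
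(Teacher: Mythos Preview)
The paper does not supply its own proof of this theorem: the statement is quoted from \cite{aydin} and left unproved, so there is nothing in the paper to compare your attempt against.

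On the mathematics: your expansion of $N_{\varphi}$ in terms of $\tilde\nabla\varphi$ and the implication $\tilde\nabla\varphi=0\Rightarrow N_{\varphi}=0$ are correct. Your unease about the converse is justified --- in fact the step you flag as ``the real obstacle'' cannot be closed by the devices you propose. Via \eqref{com} the question is equivalent to the same one for the associated almost product structure $F$; vanishing of $N_{F}$ (equivalently $N_{\varphi}$) says only that the two eigendistributions are involutive, whereas $\tilde\nabla F=0$ (equivalently $\tilde\nabla\varphi=0$) says they are parallel, i.e.\ involutive \emph{and} totally geodesic. These conditions are genuinely different: any Riemannian warped product $M_{1}\times_{f}M_{2}$ with nonconstant warping function $f$ carries an almost product structure whose Nijenhuis tensor vanishes but which is not parallel for the Levi-Civita connection, and transporting this through \eqref{co} gives a Golden Riemannian structure with $N_{\varphi}=0$ yet $\tilde\nabla\varphi\neq0$. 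Hence, if ``integrable'' is read as $N_{\varphi}=0$, the converse is false in general, and neither a Bianchi/Codazzi manipulation nor a de Rham splitting argument will rescue it. The result as stated must rest on whatever precise meaning \cite{aydin} attaches to ``integrable'' for Golden structures; you should consult that source before investing further effort in the converse.
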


\section{Submanifolds of a Golden Riemannian manifold}

Let $(M,g)$ be a submanifold of a Golden Riemannian
manifold $(\tilde{M},\tilde{g},{\varphi})$, where $g$ is the induced metric on $M$. Then, for any $X\in \Gamma (TM)$ we can
write
\begin{align}
{\varphi}X=PX+QX,  \label{ssi-1}
\end{align}%
where $P$ and $Q$ are the projections on of $T\tilde{M}$  onto $%
TM$ and $trTM$, respectively, that is,  $PX$ and $QX$ are tangent and
transversal components  of ${\varphi}X$. For any $V\in \Gamma (TM^{\bot})$ we can
write
\begin{align}
{\varphi}V=tV+sV,  \label{ssi-2}
\end{align}
where $tV$ and $sV$ are tangent and
transversal components  of ${\varphi}V$. Then we have
\begin{align}
P^{2}=P+I-tQ, \; Q=QP+sQ, \label{ssi-3}
\end{align}
\begin{align}
s^{2}=s+I-Qt, \; t=Pt+ts. \label{ssi-4}
\end{align}
From (\ref{compe}) and (\ref{golmet}), we easily see that
\begin{align}
g(PX,Y)=g(X,PY), \label{ssi-5}
\end{align}
\begin{align}
g(PX,PY)+g(QX,QY)=g(X,Y)+g(PX,Y). \label{ssi-6}
\end{align}
If $M$ is ${\varphi}-$ invariant, then $Q=0$. Hence, from (\ref{ssi-3}) and (\ref{ssi-4}) we have
\begin{align}
P^{2}=P+I,\;  s^{2}=s+I. \label{p}
\end{align}
Therefore $(P, g)$ is golden structure on $M$. Conversely, if $(P,g)$ is a golden structure on $M$, then $Q=0$ and $M$ is ${\varphi}-$ invariant in $\tilde{M}$. In this case we have the following theorem.
\begin{theorem}
Let $(M,g)$ be a submanifold of a Golden Riemannian
manifold $(\tilde{M},\tilde{g},{\varphi})$. Then $M$ is ${\varphi}-$ invariant if and only if the induced structure $(P, g)$ of $M$ is a golden structure.
\end{theorem}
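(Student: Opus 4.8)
The plan is to prove the two implications separately, using the decomposition $\varphi X = PX + QX$ from \eqref{ssi-1} together with the identity $\varphi^2 = \varphi + I$ and the relations \eqref{ssi-3}.

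First I would prove the ``only if'' direction. Assume $M$ is $\varphi$-invariant, so by definition $\varphi X \in \Gamma(TM)$ for every $X \in \Gamma(TM)$; equivalently $QX = 0$ for all $X$, i.e. $Q = 0$. Then \eqref{ssi-1} reads $\varphi X = PX$, so $\varphi|_{TM} = P$. Applying $\varphi$ again, $\varphi^2 X = \varphi(PX) = P(PX) = P^2 X$ (the middle equality using invariance once more so that $PX \in \Gamma(TM)$ and $\varphi$ acts on it as $P$). Since $\varphi^2 = \varphi + I = P + I$ on $TM$, we get $P^2 = P + I$, which is exactly relation \eqref{p}; together with the already established compatibility \eqref{ssi-5}, $g(PX,Y) = g(X,PY)$, this says $(P,g)$ is a golden structure on $M$.

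Next I would prove the ``if'' direction. Assume $(P,g)$ is a golden structure on $M$, so $P^2 = P + I$. From the first relation in \eqref{ssi-3}, $P^2 = P + I - tQ$, so comparing gives $tQ = 0$. To conclude $Q = 0$ I would use the compatibility relation \eqref{ssi-6}, namely $g(PX,PY) + g(QX,QY) = g(X,Y) + g(PX,Y)$: applying $P^2 = P + I$ and \eqref{ssi-5} one computes $g(PX,PY) = g(P^2 X, Y) = g((P+I)X,Y) = g(PX,Y) + g(X,Y)$, and substituting this into \eqref{ssi-6} forces $g(QX,QY) = 0$ for all $X,Y \in \Gamma(TM)$. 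Taking $Y = X$ gives $\|QX\|^2 = 0$, hence $QX = 0$ for every $X$, i.e. $Q = 0$. Then \eqref{ssi-1} gives $\varphi X = PX \in \Gamma(TM)$ for all $X \in \Gamma(TM)$, which is precisely the statement that $M$ is $\varphi$-invariant.

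The routine part is the bookkeeping with the projection relations; the only point requiring a little care is deducing $Q = 0$ from $(P,g)$ being golden, where the cleanest argument is the positive-definiteness trick via \eqref{ssi-6} rather than trying to invert $t$. In fact much of this is already recorded in the paragraph preceding the theorem, so the proof amounts to assembling those observations and noting that the metric compatibility \eqref{ssi-5} holds automatically for any submanifold.
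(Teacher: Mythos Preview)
Your proof is correct and follows the same route as the paper, which establishes the theorem in the paragraph immediately preceding it by reading off $P^2=P+I$ from \eqref{ssi-3} when $Q=0$ and asserting the converse. Your argument for the ``if'' direction, deducing $g(QX,QY)=0$ from \eqref{ssi-5}--\eqref{ssi-6} and then invoking positive-definiteness, actually supplies the justification that the paper leaves implicit.
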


From now, we use the same symbol $g$ for the induced metric $g$ and the metric $\tilde g$. Now, let the Golden structure be integrable, that is, �$\tilde{\nabla}_{X} {\varphi}=0$, for any $X$ on $\tilde M$ where $\tilde{\nabla}$ is the Levi-Civita-connection of ${g}$. Then, the  Gauss and Weingarten formulas are respectively given by
\begin{align}
\tilde{\nabla }_{X}Y=\nabla _{X}Y+h(X,Y),\;\forall \,X,Y\in \Gamma
(TM),\   \label{6}
\end{align}
\begin{align}
\tilde{\nabla }_{X}V=-A_{V}X+\nabla _{X}^{t}V,\;\forall \,V\in \Gamma
(trTM),\   \label{7}
\end{align}
for any $X,Y\in \Gamma (TM),$ where $\nabla _{X}Y,A_{V}X$ belong to $%
\Gamma (TM)$, while $h(X,Y),\nabla _{X}^{t}V$ belong to $\Gamma (TM^{\bot})$. From the Gauss formula, we obtain
\begin{align}
\nabla_{X}{\varphi}Y+h(X,{\varphi}Y)=P\nabla_{X}Y+Q\nabla_{X}Y+th(X,Y)+sh(X,Y). \label{ssi-7}
\end{align}
Equating the tangential and normal components of Eqn. (\ref{ssi-7}), we derive
\begin{align}
\nabla_{X}{\varphi}Y=P\nabla_{X}Y++th(X,Y), \label{s6}\\
h(X,{\varphi}Y)=Q\nabla_{X}Y+sh(X,Y). \label{s66}
\end{align}
If $M$ is ${\varphi}-$ invariant then from (\ref{s6}) and (\ref{s66}), we obtain
\begin{align}
(\nabla_{X}P)Y=0,\; h(X,PY)=sh(X,Y)\label{s7}
\end{align}
From (\ref{p}) and (\ref{s7}), we have the following Proposition.
\begin{proposition}
Let $(M, g)$ be a ${\varphi}-$ invariant submanifold of a Golden Riemannian
manifold $(\tilde{M}, {g},{\varphi})$. Then the induced structure $P$ is integrable.
\end{proposition}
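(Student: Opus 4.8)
The plan is to show that the Nijenhuis tensor of $P$ vanishes, using the characterization that an integrable golden structure on a Riemannian manifold is exactly one that is parallel with respect to the Levi-Civita connection (the analogue of the theorem of Gezer--Cengiz--Salimov quoted above, applied now to $(M,g,P)$). Since equation (\ref{s7}) already gives us $(\nabla_X P)Y = 0$ for all $X,Y \in \Gamma(TM)$ — i.e. $P$ is parallel with respect to the induced Levi-Civita connection $\nabla$ on $M$ — and since by (\ref{p}) the pair $(P,g)$ is a genuine golden structure on the $\varphi$-invariant submanifold $M$, the integrability of $P$ follows immediately from that theorem.

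More explicitly, the key steps are: first, recall from Theorem (the one cited from \cite{aydin}) that for a golden Riemannian manifold the golden structure is integrable if and only if it is parallel. Second, observe that $M$ being $\varphi$-invariant makes $(M,g,P)$ a golden Riemannian manifold in its own right: $P^2 = P + I$ holds by (\ref{p}), and the $P$-compatibility $g(PX,Y) = g(X,PY)$ is exactly (\ref{ssi-5}). Third, invoke (\ref{s7}): since $\tilde\nabla \varphi = 0$ on $\tilde M$ and $M$ is invariant, comparing tangential parts of the Gauss formula gives $(\nabla_X P)Y = 0$, so $\nabla P = 0$ on $M$. Fourth, apply the integrability-iff-parallel theorem to $(M,g,P)$ to conclude that $P$ is integrable.

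I do not anticipate a serious obstacle here — the proposition is essentially a corollary of (\ref{s7}) together with the cited integrability criterion. The only point requiring a little care is making sure that the integrability criterion of \cite{aydin} is being applied legitimately, i.e. that $(M,g,P)$ really satisfies all the hypotheses of a golden Riemannian manifold; this is precisely what Theorem~1 (the invariance characterization) and equations (\ref{p}), (\ref{ssi-5}) guarantee. If one preferred a self-contained argument avoiding the citation, one could instead compute the Nijenhuis torsion $N_P(X,Y) = [PX,PY] - P[PX,Y] - P[X,PY] + P^2[X,Y]$ directly, rewriting each Lie bracket via $\nabla$ (which is torsion-free) and using $\nabla_X(PY) = P(\nabla_X Y)$ repeatedly together with $P^2 = P+I$; all terms then cancel, showing $N_P \equiv 0$, which is the classical criterion for integrability of a polynomial structure. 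Either route yields the result.
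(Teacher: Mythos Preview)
Your proposal is correct and matches the paper's own reasoning exactly: the paper simply states that the proposition follows ``from (\ref{p}) and (\ref{s7}),'' which is precisely your argument that $(P,g)$ is a golden structure on $M$ and $\nabla P=0$, whence integrability by the cited theorem of Gezer--Cengiz--Salimov. Your write-up is in fact more detailed than the paper's one-line justification, and the optional Nijenhuis computation you sketch is a sound self-contained alternative.
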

If $M$ is anti-invariant and ${\varphi}$ is integrable, then we get
\begin{align}
\nabla_{X}{\varphi}Y=-A_{{\varphi}Y}X+\nabla_{X}^{\bot}{\varphi}Y=Q\nabla_{X}Y+th(X,Y), \label{s6*}
\end{align}
Comparing the tangential and normal parts of (\ref{s6*}), we obtain $A_{{\varphi}Y}X=0$. Then we have the following result.
\begin{proposition}
Let  $(M,g)$ be a ${\varphi}-$ anti-invariant submanifold of a Golden Riemannian
manifold $(\tilde{M},{g},{\varphi})$. If ${\varphi}$ is integrable then $A_{{\varphi}Y}X=0$, for any $X, Y\in\Gamma(TM)$.
\end{proposition}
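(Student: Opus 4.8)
The plan is to reduce the statement to a single trilinear form on $TM$ and play off two competing symmetries it carries. Set, for $X,Y,Z\in\Gamma(TM)$,
\begin{align}
T(X,Y,Z)=g(h(X,Y),{\varphi}Z)=g(A_{{\varphi}Z}X,Y),\notag
\end{align}
where the second equality is the defining relation between $h$ and the shape operator. Anti-invariance of $M$ means $PX=0$, i.e. ${\varphi}X=QX\in\Gamma(TM^{\bot})$ for every $X\in\Gamma(TM)$, so ${\varphi}Z$ is a genuine normal field and $T$ is well defined; since $h$ is symmetric, $T$ is symmetric in its first two arguments. Proving $A_{{\varphi}Z}X=0$ is exactly proving $T\equiv 0$.

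First I would show that $T$ is \emph{skew}-symmetric in its last two slots. Starting from $T(X,Y,Z)=g(h(X,Y),{\varphi}Z)$ and using that $\nabla_XY$ is tangent while ${\varphi}Z$ is normal, one gets $T(X,Y,Z)=g(\tilde\nabla_XY,{\varphi}Z)$. Now apply ${\varphi}$-compatibility (\ref{compe}) to move ${\varphi}$ onto the first factor, then the integrability hypothesis in the form $\tilde\nabla{\varphi}=0$ to rewrite ${\varphi}\tilde\nabla_XY=\tilde\nabla_X({\varphi}Y)$; since ${\varphi}Y$ is normal, the Weingarten formula (\ref{7}) gives $\tilde\nabla_X({\varphi}Y)=-A_{{\varphi}Y}X+\nabla^{t}_X({\varphi}Y)$, and pairing with the tangent field $Z$ annihilates the normal term. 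This yields $T(X,Y,Z)=-g(A_{{\varphi}Y}X,Z)=-T(X,Z,Y)$.

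Finally I would run the standard symmetrization argument: a trilinear form symmetric in one adjacent pair of arguments and skew in the other adjacent pair is identically zero, because chasing the two identities around a six-step cycle produces $T(X,Y,Z)=-T(X,Y,Z)$. Hence $T\equiv 0$, and from $T(X,Y,Z)=g(A_{{\varphi}Z}X,Y)=0$ for all $X,Y\in\Gamma(TM)$ we get $A_{{\varphi}Z}X=0$, which is the assertion.

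The main obstacle is the skew-symmetry step. Comparing tangential and normal parts of (\ref{s6*}) by itself only delivers $A_{{\varphi}Y}X=-t\,h(X,Y)$, not the vanishing of the shape operator, and $t$ cannot be discarded on an anti-invariant submanifold (indeed (\ref{ssi-3}) with $P=0$ forces $tQ=I$ on $TM$). It is precisely the interplay of the metric ${\varphi}$-compatibility — which here is of ``product type'', ${\varphi}$ being self-adjoint — with $\tilde\nabla{\varphi}=0$ that upgrades this naive identity to the skew-symmetry of $T$, and then, combined with the symmetry of $h$, forces $A_{{\varphi}Y}X=0$.
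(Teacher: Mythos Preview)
Your argument is correct and, in fact, closes a gap that the paper's own proof leaves open. The paper proceeds by writing down the single identity (\ref{s6*}) and asserting that a comparison of tangential and normal parts yields $A_{\varphi Y}X=0$. But, exactly as you observe in your last paragraph, equating tangential parts in that identity only gives $A_{\varphi Y}X=-t\,h(X,Y)$, and on an anti-invariant submanifold $t$ is far from zero: (\ref{ssi-3}) with $P=0$ forces $tQ=I$ on $TM$, so $t$ is surjective. The paper offers no reason why $t\,h(X,Y)$ should vanish. Your route is genuinely different: you package the problem into the trilinear form $T(X,Y,Z)=g(h(X,Y),\varphi Z)$, use $\varphi$-compatibility (\ref{compe}) together with $\tilde\nabla\varphi=0$ and the Weingarten formula to extract the skew-symmetry $T(X,Y,Z)=-T(X,Z,Y)$, and then play this off against the symmetry of $h$ in a six-step cycle to kill $T$. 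What the paper's approach buys is brevity at the cost of an unjustified step; what your approach buys is an actual proof, at the cost of one extra idea (the symmetrization trick). Your derivation of the skew-symmetry step is clean and correct as written.
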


Now,  we compute the relations for curvature tensors with respect to the Golden structure. We know that
\begin{align*}\tilde R(X,Y)Z=\tilde\nabla_{X}\tilde\nabla_{Y}Z-\tilde\nabla_{Y}\tilde\nabla_{X}Z-\tilde\nabla_{[X,Y]}Z\end{align*}
the curvature tensor of $\tilde{M}$  with respect to Levi-civita connection $\tilde\nabla$. If ${\varphi}$ is integrable, using (\ref{compe}) and (\ref{golmet}) we obtain the following result.
\begin{proposition} \label{e�rilik}
Let  $(M,g)$ be a submanifold with curvature tensor $R$ of a Golden Riemannian manifold $(\tilde{M}, {g},{\varphi})$. If ${\varphi}$ is integrable then we have
\begin{enumerate}
\item[(i)] $R(X,Y){\varphi}={\varphi}R(X,Y)$,
\item[(ii)] $R({\varphi}X,Y)=R(X,{\varphi}Y)$,
\item[(iii)] $R({\varphi}X,{\varphi}Y)=R({\varphi}X,Y)+R(X,Y)$,
\item[(iv)] $g(R(X,Y){\varphi}Z,{\varphi}W)=g(R(X,Y)Z,{\varphi}W)+g(R(X,Y)Z,W)$,
\item[(v)] $g(R(X,Y){\varphi}Z,W)=g(R(X,Y)Z,{\varphi}W)$.
\end{enumerate}
for any $X, Y, Z, W$ tangent to $M$.
\end{proposition}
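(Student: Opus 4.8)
The engine of the proof is the integrability hypothesis. By the theorem of Gezer, Cengiz and Salimov quoted above, $\varphi$ being integrable is equivalent to $\tilde\nabla\varphi=0$; hence $\tilde\nabla_X(\varphi Y)=\varphi(\tilde\nabla_X Y)$ for all $X,Y\in\Gamma(T\tilde M)$, so $\varphi$ is a parallel, self-adjoint $(1,1)$-tensor, self-adjointness being exactly (\ref{compe}). All five identities will be obtained from this single analytic fact combined with the algebraic relations $\varphi^{2}=\varphi+I$, (\ref{compe}), (\ref{golmet}) and the standard symmetries of the curvature tensor $R$.

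First I would prove (i): substituting $\varphi Z$ for $Z$ in $R(X,Y)Z=\tilde\nabla_X\tilde\nabla_Y Z-\tilde\nabla_Y\tilde\nabla_X Z-\tilde\nabla_{[X,Y]}Z$ and commuting $\varphi$ through each of the three covariant derivatives by means of $\tilde\nabla\varphi=0$ collapses the right-hand side to $\varphi\big(R(X,Y)Z\big)$, which gives $R(X,Y)\varphi=\varphi R(X,Y)$. This is the only step that touches the connection; the remaining four are purely algebraic consequences of (i).

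Next, pairing (i) with an arbitrary $W$ and moving $\varphi$ onto the other factor via (\ref{compe}) gives $g(R(X,Y)\varphi Z,W)=g(R(X,Y)Z,\varphi W)$, which is (v); replacing $W$ by $\varphi W$ in (v) and using (\ref{golmet}) (equivalently $\varphi^{2}=\varphi+I$) yields (iv). For (ii) I would invoke the pair symmetry $g(R(A,B)C,D)=g(R(C,D)A,B)$: one has $g(R(\varphi X,Y)Z,W)=g(R(Z,W)\varphi X,Y)=g(\varphi R(Z,W)X,Y)=g(R(Z,W)X,\varphi Y)=g(R(X,\varphi Y)Z,W)$, where the middle equalities use (i) and (\ref{compe}); since $W$ is arbitrary this gives $R(\varphi X,Y)=R(X,\varphi Y)$. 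Finally (iii) follows by applying (ii) once and then using bilinearity together with $\varphi^{2}=\varphi+I$, namely $R(\varphi X,\varphi Y)=R(\varphi^{2}X,Y)=R(\varphi X,Y)+R(X,Y)$.

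All of these computations are one or two lines long. The only point needing a little care is the derivation of (ii), where one must genuinely use the pair symmetry of $R$ — not merely its skew-symmetry in the first two arguments — and keep $\varphi$ in the correct slot while applying (i). Beyond this bookkeeping I do not expect any real obstacle.
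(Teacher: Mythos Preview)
Your argument is correct and is exactly the approach the paper has in mind: the paper gives no detailed proof but merely says the result follows from integrability together with (\ref{compe}) and (\ref{golmet}), and your derivation---first obtaining (i) from $\tilde\nabla\varphi=0$, then deducing (v), (iv), (ii), (iii) algebraically from (i), self-adjointness of $\varphi$, the pair symmetry of $R$, and $\varphi^{2}=\varphi+I$---is precisely a spelled-out version of that. The only cosmetic point is that, despite the paper calling $R$ the curvature of the submanifold $M$, the identities only make sense for the ambient curvature (since $\varphi X$ need not be tangent to $M$), and you have tacitly and correctly read it that way by computing with $\tilde\nabla$.
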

For a Riemannian manifold the Ricci tensor is defined by \cite{struc}
\begin{align}
S(X,Y)=\sum _{i=1}^{n}g(R(E_{i},X)Y,E_{i}) \label{313}
\end{align}
for any $X,Y\in \Gamma (TM)$, where $E_{1},...,E_{n}$ are local orthonormal vector fields tangent to $M$. From (\ref{compe}), (\ref{golmet}) and Proposition \ref{e�rilik} we have the following result.
\begin{proposition} \label{on}
Let  $(M,g)$ be a submanifold of a Golden Riemannian
manifold $(\tilde{M}, {g},{\varphi})$. If ${\varphi}$ is integrable then we have 
\begin{enumerate}
\item[(i)] $S({\varphi}^{2}X,Y)=S({\varphi}X,Y)+S(X,Y)$,
\item[(ii)]   $S(X,{\varphi}^{2}Y)=S(X,{\varphi}Y)+S(X,Y)$,
\item[(iii)]  $S({\varphi}X,{\varphi}Y)=S({\varphi}X,Y)+S(X,Y)$,
\item[(iv)] $S({\varphi}X,Y)=S({\varphi}Y,X)$
\end{enumerate}
for any $X, Y$ tangent to $M$.
\end{proposition}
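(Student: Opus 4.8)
The plan is to deduce all four identities of Proposition \ref{on} directly from Proposition \ref{e\i rilik} together with the $\varphi$-compatibility relations (\ref{compe}) and (\ref{golmet}), using the coordinate expression (\ref{313}) for the Ricci tensor. The main tool is that $\varphi$ is self-adjoint with respect to $g$, so inside the sum $\sum_i g(R(E_i,X)Y,E_i)$ one may freely move $\varphi$ across the metric in either slot.

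For (i) and (ii), I would start from $\varphi^2 = \varphi + I$ (equation (\ref{alt\i n})). For (ii), write $S(X,\varphi^2 Y)=\sum_i g(R(E_i,X)\varphi^2 Y, E_i)$; since $R(E_i,X)$ is linear and $\varphi^2 Y = \varphi Y + Y$, this splits as $\sum_i g(R(E_i,X)\varphi Y,E_i)+\sum_i g(R(E_i,X)Y,E_i)=S(X,\varphi Y)+S(X,Y)$. Identity (i) is the same computation applied in the second argument, or alternatively follows from (ii) together with (iv). In fact (i) and (ii) do not even need Proposition \ref{e\i rilik}; they are purely algebraic consequences of the golden relation and bilinearity of $S$.

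For (iii) I would use part (v) of Proposition \ref{e\i rilik}, namely $g(R(X,Y)\varphi Z,W)=g(R(X,Y)Z,\varphi W)$, applied with the curvature symmetry $g(R(A,B)C,D)=g(R(C,D)A,B)$ to rewrite $S(\varphi X,\varphi Y)=\sum_i g(R(E_i,\varphi X)\varphi Y,E_i)$. Moving one $\varphi$ off $\varphi Y$ and onto the $E_i$ slot via (v), then using (\ref{golmet}) / $\varphi^2=\varphi+I$ on the resulting $g(\varphi E_i,\cdot)$ terms, collapses the expression to $S(\varphi X,Y)+S(X,Y)$; equivalently, combine (i) with (iii)'s left side by noting $S(\varphi X,\varphi Y)=S(\varphi^2 X, Y)$ after transferring the $\varphi$ from the second to the first argument using part (v). For (iv), $S(\varphi X,Y)=\sum_i g(R(E_i,\varphi X)Y,E_i)$; apply part (ii) of Proposition \ref{e\i rilik}, $R(\varphi E_i, X)=R(E_i,\varphi X)$ — but here I must be careful, since the $E_i$ appear in both the curvature argument and the metric slot, so the cleanest route is the pair symmetry $g(R(E_i,\varphi X)Y,E_i)=g(R(Y,E_i)E_i,\varphi X)=g(\varphi Y, R(E_i,E_i)\cdot)$-type manipulation combined with (v); comparing with the analogous expansion of $S(\varphi Y,X)$ yields equality.

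The step I expect to be the main obstacle is (iv), the symmetry $S(\varphi X,Y)=S(\varphi Y,X)$: unlike (i)--(iii), it genuinely requires interplay between the curvature identities of Proposition \ref{e\i rilik} and the pair symmetry of $R$, and one has to handle the fact that the orthonormal frame $\{E_i\}$ sits in two slots simultaneously, so naive ``move $\varphi$ across'' arguments risk double-counting. I would resolve this by first proving the pointwise identity $\sum_i g(R(E_i,\varphi X)Y,E_i)=\sum_i g(R(\varphi E_i, X)Y, E_i)$ is \emph{not} what is needed, and instead writing both sides of (iv) as traces of the endomorphism $Z\mapsto R(\varphi Z, \cdot)\cdot$ paired appropriately, then invoking part (v) of Proposition \ref{e\i rilik} to transfer $\varphi$ between the two $Y$/$X$ arguments symmetrically. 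Once (iv) is in hand, (i) and (iii) can each be recovered from the other two, so the whole proposition reduces to two honest computations plus routine splitting.
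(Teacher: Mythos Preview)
Your approach is the paper's approach: the paper gives no detailed argument and simply says the proposition follows from (\ref{compe}), (\ref{golmet}) and Proposition \ref{e�rilik}, which is precisely the toolkit you invoke. Your observations that (i) and (ii) are immediate from $\varphi^{2}=\varphi+I$ and bilinearity, and that (iii) reduces to (iv) together with (i)/(ii), are correct and sharper than anything the paper writes out.

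Your only wobble is the discussion of (iv), which you flag as the main obstacle and then circle around without landing cleanly. In fact the identity you dismiss, $\sum_i g(R(E_i,\varphi X)Y,E_i)=\sum_i g(R(\varphi E_i,X)Y,E_i)$, \emph{is} exactly what Proposition \ref{e�rilik}(ii) gives, and it is the first step. The second step is the point you are missing: because $\varphi$ is $g$-self-adjoint by (\ref{compe}), one has $\sum_i g(L(\varphi E_i),E_i)=\sum_i g(L(E_i),\varphi E_i)$ for any endomorphism $L$ (this is just $\operatorname{tr}(L\varphi)=\operatorname{tr}(\varphi L)$ read through an orthonormal frame, and there is no double-counting). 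Applying this with $L(Z)=R(Z,X)Y$ and then Proposition \ref{e�rilik}(v) (or (i) together with (\ref{compe})) moves $\varphi$ onto $Y$, yielding $S(\varphi X,Y)=S(X,\varphi Y)$; symmetry of $S$ then gives (iv). So the ``frame sits in two slots'' issue is resolved in one line by self-adjointness of $\varphi$, not by any elaborate trace-of-endomorphism reformulation.
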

As we know that
\begin{align}
(\nabla_{W}R)(X,Y){\varphi}Z&=\nabla_{W}(R(X,Y)Z)-R(\nabla_{W}X,Y){\varphi}Z\notag\\
&-R(X,\nabla_{W}Y){\varphi}Z-R(X,Y)\nabla_{W}{\varphi}Z\label{R}
\end{align}
and
\begin{align}
(\nabla_{Z}S)({\varphi}X,Y)=\nabla_{Z}S({\varphi}X,Y)-S(\nabla_{Z}{\varphi}X,Y)-S({\varphi}X,\nabla_{Z}Y).\label{Rs}
\end{align}
Then from Eqns. (\ref{R}), (\ref{Rs}) and Proposition \ref{e�rilik}, Proposition \ref{on}, we obtain the following proposition.
\begin{proposition}
Let  $(M,g)$ be a submanifold of a Golden Riemannian manifold $(\tilde{M}, {g},{\varphi})$. If ${\varphi}$ is integrable then we have
\begin{enumerate}
\item[(i)] $(\nabla_{W}R)(X,Y){\varphi}Z={\varphi}(\nabla_{W}R)(X,Y)Z$,\ \label{R1} 
\item[(i)] $ (\nabla_{Z}S)({\varphi}X,Y)=(\nabla_{Z}S)(X,{\varphi}Y)$,
\end{enumerate}
for any $X,Y,Z,W\in \Gamma (TM)$.
\end{proposition}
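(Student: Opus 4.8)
The plan is to feed the commutation relations already obtained in Proposition~\ref{e�rilik} and Proposition~\ref{on} into the two expansion identities \eqref{R} and \eqref{Rs}, and then to collect terms, using that integrability of $\varphi$ forces $\tilde\nabla\varphi=0$ (hence $\varphi$ is parallel, cf. also $(\nabla_{W}P)=0$ in \eqref{s7}), so that $\varphi$ commutes with the induced covariant derivative $\nabla$ on $M$.

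For (i), I would start from the expansion \eqref{R} of $(\nabla_{W}R)(X,Y){\varphi}Z$, namely as $\nabla_{W}\bigl(R(X,Y){\varphi}Z\bigr)$ minus the three correction terms $R(\nabla_{W}X,Y){\varphi}Z$, $R(X,\nabla_{W}Y){\varphi}Z$ and $R(X,Y)\nabla_{W}{\varphi}Z$. In the last term I replace $\nabla_{W}{\varphi}Z$ by ${\varphi}\nabla_{W}Z$ using parallelism of $\varphi$, and in each of the four terms I pull $\varphi$ to the front by Proposition~\ref{e�rilik}(i) (again using $\tilde\nabla\varphi=0$ to move $\varphi$ out of $\nabla_{W}(\cdot)$ in the first term). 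What remains inside is exactly $\nabla_{W}\bigl(R(X,Y)Z\bigr)-R(\nabla_{W}X,Y)Z-R(X,\nabla_{W}Y)Z-R(X,Y)\nabla_{W}Z$, i.e.\ $(\nabla_{W}R)(X,Y)Z$, so that the whole expression equals ${\varphi}(\nabla_{W}R)(X,Y)Z$.

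For (ii), I would expand $(\nabla_{Z}S)({\varphi}X,Y)$ via \eqref{Rs}. The scalar term $\nabla_{Z}S({\varphi}X,Y)$ equals $\nabla_{Z}S(X,{\varphi}Y)$ because, by symmetry of $S$ together with Proposition~\ref{on}(iv), one has $S({\varphi}X,Y)=S({\varphi}Y,X)=S(X,{\varphi}Y)$ as a pointwise identity of functions. In the correction terms, $\nabla_{Z}{\varphi}X={\varphi}\nabla_{Z}X$ by parallelism, and applying the same identity $S({\varphi}\,\cdot,\cdot)=S(\cdot,{\varphi}\,\cdot)$ turns $S(\nabla_{Z}{\varphi}X,Y)$ into $S(\nabla_{Z}X,{\varphi}Y)$ and $S({\varphi}X,\nabla_{Z}Y)$ into $S(X,\nabla_{Z}{\varphi}Y)$. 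Reassembling the three pieces produces precisely the expansion of $(\nabla_{Z}S)(X,{\varphi}Y)$.

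The computations are routine once the right substitutions are made; I expect the only real obstacle to be the bookkeeping in part (i), where four separate terms must each be rewritten via Proposition~\ref{e�rilik}(i) and the common factor $\varphi$ extracted without ordering or sign slips, and the preliminary (but essential) point that $\varphi$ genuinely commutes with $\nabla_{W}$, which is exactly where the hypothesis that $\varphi$ is integrable is used.
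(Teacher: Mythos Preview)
Your proposal is correct and follows exactly the approach the paper indicates: expand via \eqref{R} and \eqref{Rs}, then apply Proposition~\ref{e�rilik}(i) and Proposition~\ref{on}(iv) together with the parallelism $\tilde\nabla\varphi=0$ to pull $\varphi$ through each term. The only quibble is that your parenthetical appeal to \eqref{s7} is misplaced (that identity is stated for $\varphi$-invariant submanifolds), but you do not actually need it---the hypothesis $\tilde\nabla\varphi=0$ already gives the required commutation $\nabla_{W}\varphi Z=\varphi\nabla_{W}Z$ directly.
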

Using the Proposition \ref{e�rilik} and Proposition \ref{on} we get the following proposition.

\begin{proposition} \label{pro}
Let  $(M, g)$ be a submanifold of a Golden Riemannian manifold $(\tilde{M}, {g},{\varphi})$. If ${\varphi}$ is integrable then we have
\begin{enumerate}
\item[(i)] $(R({\varphi}X_{1},{\varphi}X_{2}).S)(X,Y)=(R({\varphi}X_{1},X_{2}).S)(X,Y)+(R(X_{1},X_{2}).S)(X,Y)$,
\item[(ii)] $(R(X_{1},X_{2}).S)({\varphi}X,{\varphi}Y)=(R(X_{1},X_{2}).S)({\varphi}X,Y)+(R(X_{1},X_{2}).S)(X,Y)$,
\end{enumerate}
for any $X_{1},X_{2},X,Y\in \Gamma (TM)$.
\end{proposition}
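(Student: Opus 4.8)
The plan is to treat Proposition~\ref{pro} as a purely algebraic consequence of the curvature and Ricci identities already in hand, since $R(X_1,X_2)$ acts on the $(0,2)$-tensor $S$ as a derivation:
\[
(R(X_1,X_2)\cdot S)(X,Y)=-\,S\big(R(X_1,X_2)X,\,Y\big)-S\big(X,\,R(X_1,X_2)Y\big).
\]
(The overall sign here is just a convention and is irrelevant, since it multiplies both sides of each claimed identity equally.) So no analysis is involved; everything reduces to substituting the earlier pointwise identities and regrouping terms.

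For part (i), I would expand the left-hand side by the derivation formula with $X_1,X_2$ replaced by ${\varphi}X_1,{\varphi}X_2$, and then insert the curvature identity $R({\varphi}X_1,{\varphi}X_2)=R({\varphi}X_1,X_2)+R(X_1,X_2)$ (part (iii) of the curvature proposition) into each of the two slots. Using bilinearity of $S$ to distribute across the sum produces four terms, which split cleanly into the pair $-S(R({\varphi}X_1,X_2)X,Y)-S(X,R({\varphi}X_1,X_2)Y)$ and the pair $-S(R(X_1,X_2)X,Y)-S(X,R(X_1,X_2)Y)$; these are exactly $(R({\varphi}X_1,X_2)\cdot S)(X,Y)$ and $(R(X_1,X_2)\cdot S)(X,Y)$, giving (i).

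For part (ii), I would expand $(R(X_1,X_2)\cdot S)({\varphi}X,{\varphi}Y)=-S(R(X_1,X_2){\varphi}X,{\varphi}Y)-S({\varphi}X,R(X_1,X_2){\varphi}Y)$, then use the commutation $R(X,Y){\varphi}={\varphi}R(X,Y)$ (part (i) of the curvature proposition) to move ${\varphi}$ in front of $R(X_1,X_2)$ in each term, and finally apply the Ricci identity $S({\varphi}A,{\varphi}B)=S({\varphi}A,B)+S(A,B)$ from Proposition~\ref{on}(iii): in the first term with $A=R(X_1,X_2)X$, $B=Y$ (rewriting ${\varphi}R(X_1,X_2)X$ back as $R(X_1,X_2){\varphi}X$), and in the second with $A=X$, $B=R(X_1,X_2)Y$. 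Collecting the resulting four terms and comparing with the derivation-formula expansions of $(R(X_1,X_2)\cdot S)({\varphi}X,Y)$ and $(R(X_1,X_2)\cdot S)(X,Y)$ shows the two sides agree.

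The computation is entirely routine; the only points requiring care are keeping the sign convention in the derivation action consistent and tracking \emph{which} argument of $S$ the commutation $R{\varphi}={\varphi}R$ and the identity $S({\varphi}\cdot,{\varphi}\cdot)=S({\varphi}\cdot,\cdot)+S(\cdot,\cdot)$ are being applied to, so that each is invoked in the correct slot. I do not expect any genuine obstacle beyond this bookkeeping.
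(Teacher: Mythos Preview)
Your proposal is correct and matches the paper's approach: the paper simply states that the proposition follows from Proposition~\ref{e�rilik} and Proposition~\ref{on}, and your argument does exactly that, using part~(iii) of the curvature proposition for~(i) and the commutation $R\varphi=\varphi R$ together with $S(\varphi\cdot,\varphi\cdot)=S(\varphi\cdot,\cdot)+S(\cdot,\cdot)$ for~(ii). The bookkeeping you outline is precisely what is needed, and there is no gap.
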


Let $M_{p}$ and $M_{q}$ be two real-space forms with constant sectional curvatures $c_{p}$
and $c_{q}$, respectively. Then, the Riemannian curvature tensor $R$
of a locally golden product space form ($M=M_{p}(c_{p})\times M_{q}(c_{q}),g,{\varphi})$  is given by (\cite{eyasar}):
\begin{align}
R(X,Y)Z&=\left(-\frac{(1-\psi)c_{p}-\psi c_{q}}{2\sqrt{5}}\right)\{g(Y,Z)X-g(X,Z)Y+g({\varphi}Y,Z){\varphi}X\notag\\
&-g({\varphi}X,Z){\varphi}Y\}+\left(-\frac{(1-\psi)c_{p}+\psi c_{q}}{4}\right)\{g({\varphi}Y,Z)X\notag\\
&-g({\varphi}X,Z)Y+g(Y,Z){\varphi}X-g(X,Z){\varphi}Y\}. \label{egri}
\end{align}
From (\ref{313}) and (\ref{egri}), we obtain
\begin{align}
S(Y,Z)&=\{\left(-\frac{(1-\psi)c_{p}-\psi c_{q}}{2\sqrt{5}}\right)(n-2)+\left(-\frac{(1-\psi)c_{p}+\psi c_{q}}{4}\right)trace{\varphi}\}g(Y,Z)\notag\\
&+\{\left(-\frac{(1-\psi)c_{p}-\psi c_{q}}{2\sqrt{5}}\right)(trace{\varphi}-1)\notag\\
&+\left(-\frac{(1-\psi)c_{p}+\psi c_{q}}{4}\right)(n-2)\}g({\varphi}Y,Z). \label{ric}
\end{align}
Using (\ref{ric}), we get the following result.
\begin{theorem}
Let  $M=M_{p}(c_{p})\times M_{q}(c_{q})$  be a locally Golden product space form and ${\varphi}$ is integrable. Then $M$ is Ricci symmetric.
\end{theorem}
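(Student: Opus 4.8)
The plan is to prove the stronger fact that the Ricci tensor of $M$ is parallel, $\nabla S=0$, from which Ricci symmetry is immediate. The starting point is that formula (\ref{ric}) already exhibits $S$ in the form
\begin{align}
S(Y,Z)=\alpha\,g(Y,Z)+\beta\,g(\varphi Y,Z),\notag
\end{align}
where $\alpha$ and $\beta$ are the two scalar coefficients appearing in (\ref{ric}), built out of $c_{p}$, $c_{q}$, $n$ and $\mathrm{trace}\,\varphi$. So the first step is to check that $\alpha$ and $\beta$ are \emph{constants}. The sectional curvatures $c_{p},c_{q}$ and the dimension $n$ are constant by hypothesis, and $\mathrm{trace}\,\varphi$ is constant as well: the integrability of $\varphi$ gives, by the theorem of \cite{aydin} recalled in Section 2, that $\tilde\nabla\varphi=0$, hence $W(\mathrm{trace}\,\varphi)=\mathrm{trace}(\nabla_{W}\varphi)=0$ on the connected manifold $M$; equivalently, on the product $M_{p}(c_{p})\times M_{q}(c_{q})$ the golden structure acts as $\psi\,\mathrm{id}$ on the first factor and as $(1-\psi)\,\mathrm{id}$ on the second, so $\mathrm{trace}\,\varphi=(\dim M_{p})\psi+(\dim M_{q})(1-\psi)$.

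Next I would introduce the $(0,2)$-tensor $h(Y,Z):=g(\varphi Y,Z)$, so that $S=\alpha g+\beta h$ with $\alpha,\beta$ constant, and compute $\nabla h$. Here $M$ is the ambient Golden Riemannian manifold itself, so $\nabla$ is its Levi-Civita connection; thus $\nabla g=0$ and $\nabla\varphi=\tilde\nabla\varphi=0$. A short computation then gives, for all $W,Y,Z\in\Gamma(TM)$,
\begin{align}
(\nabla_{W}h)(Y,Z)=W\bigl(g(\varphi Y,Z)\bigr)-g\bigl(\varphi\nabla_{W}Y,Z\bigr)-g\bigl(\varphi Y,\nabla_{W}Z\bigr)=0,\notag
\end{align}
since expanding the first term with metric compatibility and using $\nabla_{W}(\varphi Y)=(\nabla_{W}\varphi)Y+\varphi(\nabla_{W}Y)=\varphi(\nabla_{W}Y)$ makes all terms cancel. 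Hence both $g$ and $h$ are parallel.

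Combining the two steps, $\nabla S=\alpha\,\nabla g+\beta\,\nabla h=0$, so the Ricci tensor of $M$ is parallel, i.e.\ $M$ is Ricci symmetric. The only point that needs genuine care is the constancy of $\mathrm{trace}\,\varphi$, and hence of the coefficients $\alpha,\beta$: without it $S$ would merely be a combination of $g$ and $h$ with variable coefficients and the argument would break down. Once that is settled, the remainder is the routine identity $\nabla h=0$ coming from $\nabla\varphi=0$ together with $\nabla g=0$.
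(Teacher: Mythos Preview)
Your argument is correct and follows the same route the paper intends: formula (\ref{ric}) writes $S$ as a constant linear combination of $g$ and $g(\varphi\cdot,\cdot)$, and then $\nabla g=0$ together with $\tilde\nabla\varphi=0$ (integrability) give $\nabla S=0$. Your treatment is in fact more careful than the paper's, which simply asserts the result from (\ref{ric}) without spelling out the constancy of $\mathrm{trace}\,\varphi$ or the parallelism of $g(\varphi\cdot,\cdot)$.
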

Now, we evaluate $R.S$ for a locally Golden product space form $M=M_{p}(c_{p})\times M_{q}(c_{q})$. From (\ref{egri}) and (\ref{ric}), we derive
\begin{align}
(R(X,Y).S)(Z,W)&=-S(R(X,Y)Z,W)-S(Z,R(X,Y)W)\notag\\
&=-2\{\left(-\frac{(1-\psi)c_{p}-\psi c_{q}}{2\sqrt{5}}\right)(trace{\varphi}-1)\notag\\
&+\left(-\frac{(1-\psi)c_{p}+\psi c_{q}}{4}\right)(n-2)\}g(R(X,Y)W,{\varphi}Z).\label{30}
\end{align}
This equation gives the following theorem.
\begin{theorem}
Let  $M=M_{p}(c_{p})\times M_{q}(c_{q})$  be a locally Golden product space form and ${\varphi}$ is integrable. Then $M$ is not Ricci semi-symmetric.
\end{theorem}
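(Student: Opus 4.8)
The plan is to read the conclusion straight off the formula (\ref{30}). By definition, $M$ is Ricci semi-symmetric precisely when the $(0,4)$-tensor $R\cdot S$ vanishes identically, that is, $(R(X,Y)\cdot S)(Z,W)=-S(R(X,Y)Z,W)-S(Z,R(X,Y)W)=0$ for all $X,Y,Z,W\in\Gamma(TM)$. Using the symmetries $S({\varphi}X,Y)=S(X,{\varphi}Y)$ from Proposition \ref{on} together with the curvature identities already established, this was reduced in (\ref{30}) to $(R(X,Y)\cdot S)(Z,W)=c\,g(R(X,Y)W,{\varphi}Z)$, where $c=-2\{a(\operatorname{trace}{\varphi}-1)+b(n-2)\}$ and $a,b$ denote the two structure constants appearing in (\ref{egri}); observe that $c=-2\beta$, with $\beta$ exactly the coefficient of $g({\varphi}Y,Z)$ in (\ref{ric}), so $c=0$ happens precisely when $S$ is a scalar multiple of $g$. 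So I would take (\ref{30}) as the starting point and argue that its right-hand side is not the zero tensor.

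First I would observe that ${\varphi}$ is invertible: ${\varphi}^{2}={\varphi}+I$ gives ${\varphi}({\varphi}-I)=I$, hence ${\varphi}^{-1}={\varphi}-I$, so $Z\mapsto{\varphi}Z$ is a bundle automorphism of $TM$. Consequently, if $c\neq0$, then $R\cdot S\equiv0$ would force $g(R(X,Y)W,{\varphi}Z)=0$ for all $Z$, hence $g(R(X,Y)W,V)=0$ for every $V$, and since $g$ is nondegenerate, $R\equiv0$. Substituting $R\equiv0$ back into (\ref{egri}) and evaluating on orthonormal eigenvectors of ${\varphi}$ (which has the two distinct eigenvalues $\psi$ and $1-\psi$) forces both structure constants $a$ and $b$ to vanish, i.e. $(1-\psi)c_{p}=\psi c_{q}=0$, so $c_{p}=c_{q}=0$ and $M$ is flat. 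Thus, for a non-flat $M$ with $c\neq0$, the tensor $R\cdot S$ cannot vanish identically, so $M$ is not Ricci semi-symmetric.

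The routine part that I would still write out carefully in the proof is the passage from the definition of $R\cdot S$ to the compact form (\ref{30}): one inserts (\ref{egri}) and (\ref{ric}) into $-S(R(X,Y)Z,W)-S(Z,R(X,Y)W)$ and collapses the terms using $g({\varphi}X,Y)=g(X,{\varphi}Y)$, ${\varphi}^{2}={\varphi}+I$ and the first Bianchi identity. The genuine point to be careful about --- and the main obstacle to a clean statement --- is the degenerate case $c=0$ (and the trivially flat case $c_{p}=c_{q}=0$), in which (\ref{30}) makes $R\cdot S$ vanish and $M$ would in fact be Ricci semi-symmetric; hence the theorem is to be read for a non-flat locally Golden product space form with nonvanishing scalar factor $c$, and under that proviso the argument above completes the proof.
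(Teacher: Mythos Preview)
Your approach coincides with the paper's: the paper derives (\ref{30}) and then simply asserts the theorem from it, with no further argument. So the ``route'' is identical --- compute $R\cdot S$ via (\ref{egri}) and (\ref{ric}) and read off the conclusion.

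Where you differ is that you actually supply the analysis the paper omits. The paper stops at (\ref{30}); you go on to observe that ${\varphi}$ is invertible, so that if the scalar factor $c$ is nonzero and $R\cdot S\equiv0$ then $R\equiv0$, and you trace this back to $c_p=c_q=0$. You also flag the degenerate cases (flat product, or vanishing coefficient $c$) in which $R\cdot S$ does vanish and the theorem as stated is false. This is a genuine improvement over the paper, which neither excludes these cases nor explains why the right-hand side of (\ref{30}) is generically nonzero. Your caveat that the statement should be read under the proviso ``non-flat with $c\neq0$'' is well taken; the paper's formulation is simply imprecise on this point.
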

Using Eqn. (\ref{30}) in Proposition \ref{pro}, we have the following consequence.
\begin{corollary}
Let  $M=M_{p}(c_{p})\times M_{q}(c_{q})$  be a locally golden product space form and ${\varphi}$ is integrable. Then
\begin{eqnarray}
(R({\varphi}X,Y).S)({\varphi}Z,W)=0.
\end{eqnarray}
\end{corollary}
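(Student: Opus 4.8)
The route I would take is to expand the tensor $R\cdot S$ explicitly on the prescribed arguments and let the curvature symmetries force the cancellation. From the Ricci formula (\ref{ric}) one reads off that on a locally golden product space form the Ricci tensor has the shape $S(X,Y)=\lambda\,g(X,Y)+\mu\,g(\varphi X,Y)$, where $\lambda$ and $\mu$ are constants depending only on $c_{p},c_{q},n$ and $\operatorname{trace}\varphi$. By the definition of the action of a curvature operator on a $(0,2)$-tensor, $(R(\varphi X,Y)\cdot S)(\varphi Z,W)=-S\bigl(R(\varphi X,Y)\varphi Z,\,W\bigr)-S\bigl(\varphi Z,\,R(\varphi X,Y)W\bigr)$, so inserting the two pieces of $S$ splits the quantity into a $\lambda$-part and a $\mu$-part, and it is enough to annihilate each separately.

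The $\lambda$-part equals $-\lambda\bigl[g(R(\varphi X,Y)\varphi Z,W)+g(\varphi Z,R(\varphi X,Y)W)\bigr]$, which vanishes at once since $R$ is skew in its last two arguments, $g(R(A,B)C,D)=-g(R(A,B)D,C)$. For the $\mu$-part, $-\mu\bigl[g(\varphi R(\varphi X,Y)\varphi Z,W)+g(\varphi^{2}Z,R(\varphi X,Y)W)\bigr]$, I would rewrite the first summand by applying in turn: the $\varphi$-compatibility (\ref{compe}) to push the leading $\varphi$ into the last slot; the skew-symmetry; the curvature commutation $R(\varphi X,Y)\varphi W=\varphi R(\varphi X,Y)W$ (item (i) of the curvature Proposition above); and (\ref{compe}) once more. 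This turns $g(\varphi R(\varphi X,Y)\varphi Z,W)$ into $-g(R(\varphi X,Y)W,\varphi^{2}Z)$, which is precisely the negative of the second summand $g(\varphi^{2}Z,R(\varphi X,Y)W)$. Hence the $\mu$-part is zero as well and $(R(\varphi X,Y)\cdot S)(\varphi Z,W)=0$. One can also phrase this in the ``closed form'' language: by (\ref{30}) every value of $R\cdot S$ equals a fixed constant times $g\bigl(R(\cdot,\cdot)\,\cdot,\varphi\,\cdot\bigr)$, and feeding this into part~(ii) of Proposition~\ref{pro} with the last two arguments taken to be $\varphi Z$ and $W$, then simplifying with $\varphi^{2}=\varphi+I$ and the same commutation, the terms collapse.

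The only real delicacy is the order of operations in the $\mu$-part: one must carry the $\varphi$ coming from the Ricci tensor onto the curvature operator and use $R(\varphi X,Y)\varphi\,\cdot=\varphi R(\varphi X,Y)\cdot$ at the right moment, so that the two surviving terms end up with the same argument $\varphi^{2}Z$ and opposite signs; performed in the wrong order one is left with an apparent non-zero remainder involving lower powers of $\varphi$ (this is where $\varphi^{2}=\varphi+I$ and (\ref{golmet}) would otherwise intervene). Commuting $\varphi$ past $R$ is exactly what the curvature Proposition provides, and it is available only because $\varphi$ is integrable, i.e.\ parallel; this is where the golden structure genuinely enters. It is also worth recording that $\varphi$ is invertible, $\varphi^{-1}=\varphi-I$, which is what legitimises replacing a general tangent vector by $\varphi$ of another whenever one wishes to bring arguments into the shape of the identities of Proposition~\ref{pro}.
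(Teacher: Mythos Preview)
Your argument is correct. The paper itself gives no detailed proof of this corollary; it merely indicates that the result follows from equation~(\ref{30}) together with Proposition~\ref{pro}. Your direct expansion via $S=\lambda\,g+\mu\,g(\varphi\cdot,\cdot)$ is precisely the computation that underlies (\ref{30}), so the two routes share the same engine: the $\lambda$-part vanishes by the skew-symmetry of the endomorphism $R(A,B)$, and the $\mu$-part collapses once $\varphi$ is commuted past the curvature operator (item~(i) of the curvature proposition). The alternative you sketch at the end---feeding (\ref{30}) into Proposition~\ref{pro}(ii)---is exactly the paper's indicated path, so you have in fact presented both arguments.

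One observation worth recording: your chain of identities nowhere uses that the first and third slots carry an extra~$\varphi$. Replacing $\varphi X$ and $\varphi Z$ by arbitrary $A$ and $C$, the same four steps give $g(\varphi R(A,B)C,W)=-g(\varphi C,R(A,B)W)$, hence $(R(A,B)\cdot S)(C,W)=0$ for all tangent vectors. Thus under the integrability hypothesis your computation actually yields the stronger conclusion $R\cdot S\equiv 0$, of which the stated corollary is a particular case; the remark about $\varphi^{-1}=\varphi-I$ is therefore not needed here.
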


\section{Slant submanifolds of a Golden Riemannian manifold}

Let  $(M,g)$ be a submanifold of a Golden Riemannian manifold $(\tilde{M},\tilde{g},{\varphi})$. For each nonzero vector $X$ tangent to $M$ at $p$, let $\theta (X)$ be the angle between $TM$ and ${\varphi}X$. If $\theta (X)$ is independent of the choice of $p\in M$ and $X\in T_{p}M$ then $M$ is called
a slant submanifold. If the slant angle $\theta=0$ and $\theta=\frac{\pi}{2}$, then $M$ is an ${\varphi}-$ invariant and ${\varphi}-$ anti-invariant submanifold, respectively. A slant submanifold which is neither invariant nor anti-invariant is called proper slant submanifold.

On the similar line of B.-Y. Chen \cite{C1,C2}, we give the following characterization of slant submanifolds in a Golden Riemannian manifold.
\begin{theorem}\label{teo}
Let  $(M, g)$ be a submanifold of a Golden Riemannian manifold $(\tilde{M}, {g},{\varphi})$. Then, $M$ is slant submanifold if and only if there exists a constant $\lambda\in [0,1]$ such that
\begin{eqnarray}
P^{2}=\lambda ({\varphi}+I), \label{slant}
\end{eqnarray}
Furthermore, if $\theta$ is slant angle of $M$, then $\lambda=cos^{2}\theta$.
\end{theorem}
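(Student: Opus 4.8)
The plan is to mimic Chen's classical argument for slant submanifolds of almost Hermitian manifolds, adapting it to the relation $\varphi^2 = \varphi + I$. First I would fix a nonzero $X \in T_pM$ and decompose $\varphi X = PX + QX$ with $PX$ tangent and $QX$ normal. Since $\theta(X)$ is the angle between $\varphi X$ and $TM$, the tangential projection $PX$ of $\varphi X$ satisfies $\cos\theta(X) = |PX|/|\varphi X|$, hence $g(PX, PX) = \cos^2\theta(X)\, g(\varphi X, \varphi X)$. Using \eqref{golmet} (equivalently \eqref{ssi-6} with $X=Y$, or directly $g(\varphi X,\varphi X) = g(\varphi X, X) + g(X,X)$) and the self-adjointness of $P$ from \eqref{ssi-5}, I would rewrite this as
\begin{align}
g(P^2 X, X) = \cos^2\theta(X)\,\bigl(g(PX,X) + g(X,X)\bigr) = \cos^2\theta(X)\, g\bigl((\varphi+I)X, X\bigr),\notag
\end{align}
where in the last step I use that $g(PX,X) = g(\varphi X, X)$ since $QX$ is normal.

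Next, for the forward direction, assume $M$ is slant with constant angle $\theta$. The displayed identity then reads $g\bigl(P^2 X - \cos^2\theta\,(\varphi+I)X,\, X\bigr) = 0$ for every tangent $X$. To upgrade this from a quadratic-form statement to the operator identity $P^2 = \cos^2\theta\,(\varphi+I)$, I would note that both $P^2$ and $\varphi + I$ are $g$-self-adjoint on $TM$ (the former because $P$ is self-adjoint by \eqref{ssi-5}, the latter because $\varphi$ restricted and projected — more carefully, I should check $g(\varphi X, Y)$ restricted to $TM$ is symmetric, which follows from \eqref{compe}; note $g((\varphi+I)X,Y)$ for $X,Y \in TM$ equals $g(PX,Y)+g(X,Y) = g(P^2X\text{-free expression})$ — actually simplest is that $g((\varphi+I)X,Y) = g(\varphi X,Y)+g(X,Y)$ is symmetric in $X,Y$ by \eqref{compe}). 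A symmetric operator whose associated quadratic form vanishes identically is the zero operator, so $P^2 = \cos^2\theta\,(\varphi+I)$, giving \eqref{slant} with $\lambda = \cos^2\theta \in [0,1]$.

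For the converse, suppose \eqref{slant} holds with constant $\lambda \in [0,1]$. Then for any nonzero $X \in T_pM$,
\begin{align}
\cos^2\theta(X) = \frac{g(PX,PX)}{g(\varphi X,\varphi X)} = \frac{g(P^2X,X)}{g((\varphi+I)X,X)} = \frac{\lambda\, g((\varphi+I)X,X)}{g((\varphi+I)X,X)} = \lambda,\notag
\end{align}
provided the denominator $g((\varphi+I)X,X) = g(\varphi X, X) + g(X,X) = g(\varphi X, \varphi X)$ is nonzero; this is guaranteed because $\varphi$ is an isomorphism (its minimal polynomial $t^2-t-1$ has nonzero roots $\psi, 1-\psi$), so $\varphi X \neq 0$ and thus $g(\varphi X, \varphi X) > 0$. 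Hence $\theta(X)$ is constant, equal to $\arccos\sqrt{\lambda}$, independent of $p$ and $X$, so $M$ is slant with $\cos^2\theta = \lambda$.

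The main obstacle I anticipate is the passage in the forward direction from the scalar identity $g(P^2X - \lambda(\varphi+I)X, X) = 0$ for all $X$ to the operator identity: this requires care to confirm self-adjointness of $\varphi+I$ acting on $TM$ in the induced metric (which is exactly what \eqref{compe} provides) so that the polarization/symmetry argument applies. A secondary point worth stating cleanly is why $\varphi X \neq 0$ for $X \neq 0$, ensuring all the denominators above are legitimate; this is immediate from $\varphi$ being invertible on $T\tilde M$, but should be noted rather than assumed silently.
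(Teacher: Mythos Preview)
Your proposal is correct and follows essentially the same route as the paper: both compute $\cos\theta(X)=|PX|/|\varphi X|$, use $g(\varphi X,\varphi X)=g(\varphi X,X)+g(X,X)$ and the self-adjointness of $P$ to obtain the scalar identity $g(P^2X,X)=\cos^2\theta\,g((\varphi+I)X,X)$, and then pass to the operator identity. You are in fact more careful than the paper on two points it leaves implicit---the polarization step (using symmetry of the bilinear form to upgrade the quadratic identity to an operator identity) and the nonvanishing of $|\varphi X|$ via invertibility of $\varphi$.
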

\begin{proof}
Let $M$ is a slant submanifold of $\tilde{M}$. Then $\cos\theta (X)$ is independent $p\in M$ and $X\in T_{p}M$.
Therefore, from Eqns. (\ref{compe}) and (\ref{ssi-1}), we get
\begin{align}
\cos \theta(X)=\frac{g({\varphi}X,PX)}{|PX||{\varphi}X|}=\frac{g(X,{\varphi}PX)}{|PX||{\varphi}X|}. \label{cos}
\end{align}
On the other hand, by definition we have $\cos \theta(X)=|\frac{PX}{{\varphi}X}|$ and from (\ref{cos}), we derive $\cos \theta(X)=\frac{g(X,P^{2}X)}{|{\varphi}X||{\varphi}X|\cos \theta(X)}$. Thus, we obtain $\cos^{2} \theta(X)=\frac{g(X,P^{2}X)}{g(X,X)+g({\varphi}X,X)}$. Hence, we have $P^{2}=\lambda ({\varphi}+I)$.

Conversely, if we assume that $P^{2}=\lambda ({\varphi}+I)$, then we obtain $\lambda=cos^{2}\theta$, i.e., $\theta(X)$ is constant on $M$ and hence $M$ is slant, which proves the theorem completely.
\end{proof}
Using Eqn. (\ref{alt�n}) we have the following consequence of the above theorem.
\begin{corollary}
Let  $(M, g)$ be a submanifold of a Golden Riemannian manifold $(\tilde{M}, {g},{\varphi})$. Then, $M$ is a slant submanifold if and only if there exists a constant $\lambda\in [0,1]$ such that
\begin{align}
{\varphi}^{2}=\frac{1}{\lambda} P^{2},
\end{align}
where $\lambda=cos^{2}\theta$ and $\theta$ is the slant angle of $M$.
\end{corollary}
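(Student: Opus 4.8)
The plan is to deduce this corollary directly from Theorem~\ref{teo} together with the defining identity~(\ref{alt�n}) of the golden structure. By Theorem~\ref{teo}, $M$ is slant if and only if there is a constant $\lambda\in[0,1]$ with $P^{2}=\lambda({\varphi}+I)$, and in that case $\lambda=\cos^{2}\theta$. So the content of the corollary is merely that, when $\lambda\neq 0$, the relation $P^{2}=\lambda({\varphi}+I)$ can be rewritten as ${\varphi}^{2}=\tfrac{1}{\lambda}P^{2}$. The bridge is Eqn.~(\ref{alt�n}), which gives ${\varphi}^{2}={\varphi}+I$; substituting this into $P^{2}=\lambda({\varphi}+I)$ yields $P^{2}=\lambda{\varphi}^{2}$, and dividing by $\lambda$ (legitimate since $\lambda>0$ in the proper-slant or invariant range under consideration) gives ${\varphi}^{2}=\tfrac{1}{\lambda}P^{2}$.

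First I would state the forward direction: assume $M$ is slant with slant angle $\theta$. Invoke Theorem~\ref{teo} to obtain $P^{2}=\lambda({\varphi}+I)$ with $\lambda=\cos^{2}\theta$. Then replace ${\varphi}+I$ by ${\varphi}^{2}$ using~(\ref{alt�n}) to get $P^{2}=\lambda{\varphi}^{2}$, hence ${\varphi}^{2}=\tfrac{1}{\lambda}P^{2}$. For the converse, assume ${\varphi}^{2}=\tfrac{1}{\lambda}P^{2}$ for some constant $\lambda\in[0,1]$; multiply through by $\lambda$ and apply~(\ref{alt�n}) in the form ${\varphi}^{2}={\varphi}+I$ to recover $P^{2}=\lambda({\varphi}+I)$, which by Theorem~\ref{teo} forces $M$ to be slant with $\lambda=\cos^{2}\theta$.

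The only genuine subtlety — and the one point I would be careful to flag — is the division by $\lambda$: the rewriting ${\varphi}^{2}=\tfrac{1}{\lambda}P^{2}$ only makes sense when $\lambda=\cos^{2}\theta\neq 0$, i.e.\ when the slant angle is not $\tfrac{\pi}{2}$ (the anti-invariant case must be excluded, since there $P=0$ and $\lambda=0$). Apart from this caveat the argument is a one-line algebraic manipulation on top of Theorem~\ref{teo}, so there is no real obstacle; the ``hard part'' is purely cosmetic, namely recording the hypothesis $\lambda\in(0,1]$ (equivalently $\theta\in[0,\tfrac{\pi}{2})$) precisely enough that the stated equivalence is literally correct.
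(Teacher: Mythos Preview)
Your argument is exactly the paper's: the corollary is stated as an immediate consequence of Theorem~\ref{teo} together with Eqn.~(\ref{alt�n}), via the substitution ${\varphi}+I={\varphi}^{2}$. Your additional remark that the rewriting requires $\lambda\neq 0$ (so the anti-invariant case $\theta=\tfrac{\pi}{2}$ must be excluded) is a valid caveat that the paper does not make explicit.
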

\begin{lemma}
Let  $(M,g)$ be a slant submanifold of a Golden Riemannian manifold $(\tilde{M},\tilde{g},{\varphi})$. Then, for any $X,Y\in\Gamma(TM)$, we have
\begin{eqnarray}
g(PX,PY)=cos^{2}\theta(g(X,Y)+g(X,PY)),\label{kos}\\
g(QX,QY)=sin^{2}\theta(g(X,Y)+g(PX,Y)).\label{sin}
\end{eqnarray}
\end{lemma}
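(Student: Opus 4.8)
The plan is to derive both identities from the slant characterization \eqref{slant}, i.e. $P^{2} = \cos^{2}\theta\,(\varphi + I)$, together with the symmetry property \eqref{ssi-5} of $P$ and the compatibility relations \eqref{ssi-5}, \eqref{ssi-6}.

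First I would prove \eqref{kos}. Starting from $g(PX,PY)$, use the self-adjointness of $P$ (Eqn. \eqref{ssi-5}) to write $g(PX,PY) = g(X, P^{2}Y)$. Now apply Theorem \ref{teo}, namely $P^{2} = \cos^{2}\theta\,(\varphi + I)$, so that $g(X,P^{2}Y) = \cos^{2}\theta\, g(X, (\varphi+I)Y) = \cos^{2}\theta\,\bigl(g(X,\varphi Y) + g(X,Y)\bigr)$. Finally, since $\varphi Y = PY + QY$ with $QY$ normal and $X$ tangent, $g(X,\varphi Y) = g(X,PY)$, which yields \eqref{kos}. This step is essentially mechanical.

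Next I would prove \eqref{sin}. The natural route is to feed \eqref{kos} into the general compatibility identity \eqref{ssi-6}, which reads $g(PX,PY) + g(QX,QY) = g(X,Y) + g(PX,Y)$. Solving for $g(QX,QY)$ gives $g(QX,QY) = g(X,Y) + g(PX,Y) - g(PX,PY)$. Substituting the expression from \eqref{kos}, $g(PX,PY) = \cos^{2}\theta\,\bigl(g(X,Y) + g(X,PY)\bigr)$, and using once more that $P$ is self-adjoint so $g(PX,Y) = g(X,PY)$, the right-hand side collapses to $\bigl(1 - \cos^{2}\theta\bigr)\bigl(g(X,Y) + g(PX,Y)\bigr) = \sin^{2}\theta\,\bigl(g(X,Y) + g(PX,Y)\bigr)$, which is \eqref{sin}.

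I do not anticipate a serious obstacle here; the only point requiring a little care is the bookkeeping of which inner products vanish because one factor is normal and the other tangent, and consistent use of the self-adjointness \eqref{ssi-5} to rewrite $g(PX,Y)$ as $g(X,PY)$ so that the two stated forms match exactly. Everything else follows by direct substitution from Theorem \ref{teo} and Eqns. \eqref{ssi-5}, \eqref{ssi-6}.
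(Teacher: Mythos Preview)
Your proposal is correct and follows essentially the same route as the paper: use the self-adjointness of $P$ from \eqref{ssi-5} together with the slant relation \eqref{slant} to obtain \eqref{kos}, and then subtract \eqref{kos} from the compatibility identity \eqref{ssi-6} to get \eqref{sin}. The only extra ingredient you spell out explicitly---that $g(X,\varphi Y)=g(X,PY)$ because $QY$ is normal---is implicit in the paper's argument.
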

\begin{proof}
From (\ref{ssi-5}) and (\ref{slant}), we obtain
$$g(PX,PY)=g(X,\lambda \varphi{Y}+\lambda Y)=cos^{2}\theta(g(X,Y)+g(X,PY)).$$
Moreover, from (\ref{ssi-6}) and (\ref{kos}), we derive
$$g(QX,QY)=g(X,Y)+g(PX,Y)-g(PX,PY)=sin^{2}\theta(g(X,Y)+g(PX,Y)).$$
Hence, the proof is complete.
\end{proof}

Now, we construct some non-trivial examples of slant submanifolds of a Riemannian manifold with Golden structure.
\begin{example}
\rm{Consider a submanifold $M$ of Euclidean 4-space $\mathbb{R}^4$ given by the following immersion
\begin{align*}x(u_{1},u_{2})=(u_{1}\cos\theta, u_{1}\sin\theta, u_{2}, 0).
\end{align*}
Then the tangent space $TM$ is spanned by the following vector fields
\begin{align*}e_{1}=(\cos\theta,\sin\theta,0,0),\,\,e_{2}=(0,0,1,0).
\end{align*}
Now, we consider the Golden structure from Example \ref{ex}. Then, we obtain
\begin{align*}{\varphi}e_{1}=(\psi \cos\theta,\psi \sin\theta,0,0),\,\,\, {\varphi}e_{2}=(0,0,1-\psi,0).
\end{align*}
 Thus, we derive
\begin{align*}\langle{\varphi}e_{1},e_{1}\rangle=\psi,\;\langle{\varphi}e_{2},e_{2}\rangle=1-\psi,\;\langle{\varphi}e_{1},e_{2}\rangle=0
\end{align*}
 and
\begin{align*}Pe_{1}=\psi e_{1},\;Pe_{2}=(1-\psi)e_{2} .
\end{align*}
If $\Theta$ is the slant angle of $M$, then we get $\cos\Theta=1$, thus $M$ is a ${\varphi}-$ invariant submanifold.}
\end{example}

\begin{example}
\rm{Consider the Euclidean $4-$space $\mathbb{R}^4$ with standart coordinates $(x_{1},x_{2},x_{3},x_{4})$. Let ${\varphi}$ be an $(1,1)$ tensor field on $\mathbb{R}^4$ given by
\begin{align*}
{\varphi}(x_{1},x_{2},x_{3},x_{4})=(\psi x_{1},(1-\psi) x_{2},\psi x_{3},(1-\psi)x_{4})
\end{align*}
for any $(x_{1},x_{2},x_{3},x_{4})\in \mathbb{R}^4$, where $\psi=\frac{1+\sqrt{5}}{2}$ and $1-\psi=\frac{1-\sqrt{5}}{2}$ are the roots of the equation $x^{2}=x+1$.
Then, we obtain
\begin{align*}
{\varphi}^{2}(x_{1},x_{2},x_{3},x_{4})&=(\psi^{2} x_{1},(1-\psi)^{2} x_{2},\psi^{2}x_{3},(1-\psi)^{2}x_{4}),\\
&=(\psi x_{1},(1-\psi) x_{2},\psi x_{3},(1-\psi)x_{4})+(x_{1},x_{2},x_{3},x_{4}).
\end{align*}
Thus, we have ${\varphi}^{2}-{\varphi}-I=0$. Moreover, the metric $\langle\;\rangle$ is ${\varphi}-$ compatible. Hence, $(\mathbb{R}^4,\langle\:\rangle,{\varphi})$ is a Golden Riemannian manifold .Now, consider a submanifold $M$ of $\mathbb{R}^4$ given by the immersion
\begin{align*}
x(u_{1}, u_{2})=(\psi u_{1},(1-\psi)u_{1},\psi u_{2},(1-\psi) u_{2}).
\end{align*}
Then we have 
\begin{align*}
e_{1}=(\psi,1-\psi,0,0),\,\,\, e_{2}=(0,0,\psi,1-\psi)
\end{align*}
and
\begin{align*}
{\varphi}e_{1}=(\psi+1,2-\psi,0,0),\,\,\, {\varphi}e_{2}=(0,0,\psi+1,2-\psi).\end{align*}
 Thus, we derive
\begin{align*}
\langle{\varphi}e_{1},e_{1}\rangle=4,\;\,\langle{\varphi}e_{2},e_{2}\rangle=4,\;\,\langle{\varphi}e_{1},e_{2}\rangle=0
\end{align*}
and
\begin{align*}Pe_{1}=\frac{4}{3} e_{1},\;Pe_{2}=\frac{4}{3}e_{2} .
\end{align*}
Then $M$ is a slant submanifold with slant angle $\Theta=\cos^{-1}\left(\frac{4}{\sqrt{21}}\right)$.}
\end{example}

\begin{example}
\rm{Consider the Euclidean $4-$space $\mathbb{R}^4$ with standart coordinates $(x_{1},x_{2},x_{3},x_{4})$. Let ${\varphi}$ be an $(1,1)$ tensor field on $\mathbb{R}^4$ defined by
$${\varphi}(x_{1},x_{2},x_{3},x_{4})=((1-\psi)x_{1}, (1-\psi)x_{2},\psi x_{3},\psi x_{4})$$
for every point $(x_{1},x_{2},x_{3},x_{4})\in \mathbb{R}^4$, where $\psi=\frac{1+\sqrt{5}}{2}$ and $1-\psi=\frac{1-\sqrt{5}}{2}$ are the roots of the equation $x^{2}=x+1$. Then it is easy to see that $\varphi$ is a Golden structure on $\mathbb{R}^4$ with ${\varphi}-$ compatible metric $\langle\;\rangle$. Hence, $(\mathbb{R}^4,\langle\:\rangle,{\varphi})$ is a Golden Riemannian manifold. 

Consider a submanifold $M$ of $\mathbb{R}^4$ given by
\begin{align*}
x(u_{1},u_{2})=(k\psi u_{1},k\psi u_{2},(1-\psi)u_{1},(1-\psi) u_{2}),
\end{align*}
for any $k\neq0, 1$ .
Then we have $e_{1}=(k\psi,0,1-\psi,0)$, $e_{2}=(0,k\psi,0,1-\psi)$, ${\varphi}e_{1}=(-k,0,-1,0)$, ${\varphi}e_{2}=(0,-k,0,-1)$. Then, we obtain
\begin{align*}
\langle{\varphi}e_{1},e_{1}\rangle=\langle{\varphi}e_{2},e_{2}\rangle=-1+\psi-k^{2}\psi,\;\langle{\varphi}e_{1},e_{2}\rangle=0.
\end{align*}
If $\theta$ is the slant angle of $M$, then $M$ is a slant submanifold with slant angle $\theta=\cos^{-1}\left(\frac{-1+\psi-k^{2}\psi}{\sqrt{k^{2}+1}}\right)$.}
\end{example}
Now, we give another useful result for slant submanifolds of Golden Riemannian manifolds.
\begin{theorem}
Let  $(M, g)$ be a submanifold of Golden Riemannian manifold $(\tilde{M},\tilde{g},{\varphi})$. Then $M$ is proper slant submanifold of $\tilde{M}$ if and only if there exists a constant $k \in [0,1]$ such that
\begin{align}
tQX=k(P+I)-(1-k)Q
\end{align}
 for any $X,Y\in \Gamma(TM)$. Furthermore $k=\sin^{2}\theta$ and $\theta$ is the slant angle of $M$.
\end{theorem}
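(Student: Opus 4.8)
The plan is to reduce the claimed identity to the slant characterization already established in Theorem~\ref{teo}, namely $P^2=\lambda(\varphi+I)$ with $\lambda=\cos^2\theta$. First I would start from the two basic decomposition formulas $\varphi X=PX+QX$ and $\varphi V=tV+sV$, and from the algebraic relations (\ref{ssi-3}): $P^2=P+I-tQ$ and $Q=QP+sQ$. The first of these can be rewritten as
\begin{align}
tQX=(P+I)X-P^2X, \notag
\end{align}
so the whole point is to express $P^2X$ in terms of $P$ and $Q$ using slantness. By Theorem~\ref{teo}, $M$ is proper slant (so $\theta\in(0,\pi/2)$ and $\lambda=\cos^2\theta\in(0,1)$) if and only if $P^2=\cos^2\theta\,(\varphi+I)$. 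Substituting $\varphi X=PX+QX$ gives $P^2X=\cos^2\theta\,(PX+QX+X)=\cos^2\theta\,\big((P+I)X+QX\big)$.

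Next I would plug this into the rewritten (\ref{ssi-3}):
\begin{align}
tQX=(P+I)X-\cos^2\theta\,(P+I)X-\cos^2\theta\,QX=\sin^2\theta\,(P+I)X-\cos^2\theta\,QX. \notag
\end{align}
Setting $k=\sin^2\theta$, so that $\cos^2\theta=1-k$, this is precisely $tQX=k(P+I)X-(1-k)QX$, which is the asserted formula (the free "$X$" in the statement being understood as applied to the tangent vector). The converse is equally direct: if $tQX=k(P+I)X-(1-k)QX$ holds with $k$ constant, then combining with $tQX=(P+I)X-P^2X$ yields $P^2X=(1-k)\big((P+I)X+QX\big)=(1-k)(\varphi+I)X$, so by Theorem~\ref{teo} $M$ is slant with $\cos^2\theta=1-k$; since $k\in[0,1]$ one still needs $k\notin\{0,1\}$ for properness, which I would note is the condition that the submanifold be neither invariant ($k=0$) nor anti-invariant ($k=1$).

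The computation itself is routine; the only genuine care required is bookkeeping about what "$(P+I)-(1-k)Q$" means as an operator identity versus applied to $X$, and confirming that the relation (\ref{ssi-3}) is being used in the correct form. I would also remark that no integrability hypothesis on $\varphi$ is needed here, since the argument is purely algebraic and relies only on (\ref{ssi-3}) and Theorem~\ref{teo}. The main (minor) obstacle is simply stating the result cleanly, since as written the formula mixes operator notation with an argument $X$; I would phrase the proof so that both sides are read as the tangential part of $\varphi(QX)$, i.e. $t(QX)=k(P+I)X-(1-k)QX$ for all $X\in\Gamma(TM)$.
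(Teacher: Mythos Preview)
Your proposal is correct and follows essentially the same route as the paper: both directions are obtained by combining the algebraic identity $tQX=(P+I)X-P^{2}X$ from (\ref{ssi-3}) with the slant characterization $P^{2}=\cos^{2}\theta\,(\varphi+I)$ of Theorem~\ref{teo}, after expanding $\varphi X=PX+QX$. Your additional remarks on the properness endpoints $k\in\{0,1\}$ and on the purely algebraic (non-integrability-dependent) nature of the argument are accurate refinements that the paper's proof leaves implicit.
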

\begin{proof}
 From (\ref{ssi-3}) we know that
 \begin{align}
tQX=-P^{2}X+PX+X \label{denk}
\end{align}
for any $X\in\Gamma(TM)$. If $M$ is a slant submanifold, then using (\ref{ssi-1}) and (\ref{slant}), we obtain
\begin{align*}
tQX &=-\lambda ({\varphi}X+X)+PX+X, \\
&=-\lambda (PX+X)-\lambda QX+PX+X,\\
&=(1-\lambda)(PX+X)-\lambda QX.
\end{align*}
Conversely, we suppose that $tQX=k(P+I)-(1-k)Q,\;k \in [0,1]$.  Then from Eqns. (\ref{ssi-1}) and (\ref{ssi-3}), we derive
\begin{align*}
P^{2}X&=PX+X-tQX \\
&=PX+X+(1-k)QX-k(PX+X),\\
&=(1-k)({\varphi}X+X).
\end{align*}
If we put $(1-k)=\lambda=cos^{2}\theta$, then $M$ is a slant submanifold. Hence, the theorem is proved completely.
\end{proof}




\end{document}